\newtheorem{thm}{Theorem}[section]
\newtheorem{lem}[thm]{Lemma}
\newtheorem{prop}[thm]{Proposition}
\newtheorem{dfn}[thm]{Definition}
\newtheorem{rmk}[thm]{Remark}
\numberwithin{figure}{section}
\newcommand{\refT}[1]{Theorem~\ref{#1}}
\newcommand{\refL}[1]{Lemma~\ref{#1}}
\newcommand{\refR}[1]{Remark~\ref{#1}}
\newcommand{\refS}[1]{Section~\ref{#1}}
\newcommand{\refP}[1]{Proposition~\ref{#1}}
\newcommand{\refand}[2]{\ref{#1} and~\ref{#2}}
\newcommand{\refEq}[1]{(\ref{#1})}
\newcommand{\E}[1]{{\mathbf E}\left[#1\right]}
\newcommand{\p}[1]{{\mathbf P}\left(#1\right)}
\newcommand{\I}[1]{{\mathbf 1}_{[#1]}}
\newcommand{\N}{\mathbb{N}}
\newcommand{\ceil}[1]{\lceil #1 \rceil}
\newcommand{\K}{K}
\title{A degree-biased cutting process for random recursive trees}
\author[1]{Laura Eslava}
\author[2]{Sergio I. L\'opez}
\author[1]{Marco L. Ortiz}
\affil[1]{Instituto de Investigaciones en Matem\'aticas Aplicadas y en Sistemas, Universidad Nacional Aut\'onoma de México.}
\affil[2]{Facultad de Ciencias, Universidad Nacional Aut\'onoma de M\'exico.}
\date{}
\begin{document}
	\maketitle
	
	\abstract
	
	We investigate a degree-biased cutting process on random recursive trees, where each vertex is deleted with probability proportional to its degree. We establish the splitting property and derive the explicit distribution of the number of vertices deleted in each cut. This leads to a recursive formula for $\K_n$, the number of cuts needed to erase a random recursive tree with $n$ vertices. Furthermore, we show that $\K_n$ is stochastically dominated by $J_n$, the number of jumps made by a related walk with a barrier. We prove that $J_n$ converges in distribution to a random variable with a spectrally negative stable distribution. Finally, we examine connections between this cutting procedure and a coalescing process on the set of $n$ elements. \\

Keywords: Random trees; cutting down process; random cuts; degree sequence.

AMScode: 05C80, 60F05.

	\section{Introduction}\label{sec: Intro}
	
	An increasing tree on $[n]$ is a rooted labeled tree, where labels along a path away from the root are in increasing order. We write $I_n$ for the set of increasing trees of size $n$; namely, with $n$ vertices. A random recursive tree on $[n]$, denoted $T_n$, is a tree chosen uniformly at random from $I_n$. A classical construction of $T_n$ proceeds as follows: starting with vertex $1$ as the root, for each $2\le k\le n$ a vertex is chosen uniformly at random from $\{1,..., k-1\}$, and vertex $k$ is attached to the chosen vertex.
	
	The concept of cutting down random networks was introduced in \cite{Meir1970} for random rooted labeled trees and soon afterward studied for the case of random recursive trees \cite{Meir1974}. In this original procedure, the goal is to isolate the root by sequentially removing a randomly chosen edge and discarding the subtree that does not contain the root. The procedure ends when only the root remains, at which point we say that the \emph{root has been isolated} or that the \emph{tree has been erased}. 
	
	Note that a vertex-removal process is asymptotically equivalent to the edge-removal process   since removing a uniformly chosen edge is equivalent to removing a vertex uniformly chosen from the current vertex set, excluding the root. On the other hand, a central property of cutting processes, known as the \emph{splitting property} is that, conditionally on its size, the remaining tree after each cut is itself distributed as the original random tree. The probabilistic techniques used to analyze the limiting distribution of $X_n$, the number of random cuts required to erase a random recursive tree with $n$ vertices are based on the splitting property and are extended, for example, to study the number of collisions in a beta-coalescent until coagulation \cite{Iksanov2007,Iksanov2008}.
	
	More specifically, Meir and Moon \cite{Meir1974} exploit the splitting property to obtain the asymptotic limit of the expected value of $X_n$ and Panholzer established the law of large numbers for $X_n$\cite{Panholzer2004}. While using different methods (singularity analysis of the generating functions and a probabilistic argument, reviewed in Section \ref{sec: Jumps}), both Drmota et al, and Iksanov and Möhle deduce that the fluctuations of $X_n$, after an appropriate normalization and centering, converge in distribution to a random variable with a spectrally negative stable distribution \cite{Drmota2009,Iksanov2007}; see \eqref{eq: esp Xn} and \eqref{eq: conv Xn} in Section~\ref{sec: Xn} for a detailed exposition of these results. 
	
	The splitting property determines which of the two models is deemed more suitable for analysis; for example, Galton-Watson trees are studied through vertex-removal processes due to the independent reproduction of their vertices; see for example \cite{ABetal11,Janson06}.
	Baur \cite{Baur2014} provides a comprehensive survey of cutting models and related processes for random recursive trees up to 2014. In 2019, Cai et al. introduced a cutting procedure aimed at network resilience. In their $k$-cut model, edges are randomly selected and removed only after receiving $k$ cuts \cite{Cai2019,Cai2019b,Wang2021}.
	
	The main objective of this paper is to introduce cutting procedures for random recursive trees that speed up the deletion of the tree, meaning that fewer cuts are needed to isolate the root. Eslava et. al. \cite{Eslava2025} introduced a targeted cutting process for random recursive trees.
	Conditionally given the degree sequence of $T_n$, this is a deterministic procedure that significantly reduces the number of cuts required to isolate the root. The process begins by listing the vertices in decreasing order of their degrees and then removing vertices in this order. In the context of Galton-Watson trees, Dieuleveut studied a degree-biased vertex-removal process \cite{Dieuleveut2015}. In this process, a vertex is selected with probability proportional to its degree, then all its descendants (including their subtrees) are removed while the  selected vertex remains. 
	
	In this paper, we introduce the  \emph{degree-biased cutting procedure}, an iterative process of \emph{degree-biased vertex-removals} where vertices are removed, with probability proportional to their degree,
	until the root is isolated or the tree is instantly erased. In either case, we say that the \emph{tree has been erased} (see Definitions \refand{dfn: cut}{dfn: process}). We are concerned with the limiting distribution of $K_n$, the number of cuts required to erase a random recursive tree with $n$ vertices. 
	
	This procedure resembles the model studied by Dieuleveut; however, the key difference is that in our model, the selected vertex is removed rather than its descendants so that the removal scheme preserves the splitting property (see Proposition \ref{prop: split}). 
	Although the splitting property is maintained in the degree-biased cutting process, applying the methods from \cite{Iksanov2007,Iksanov2008} to degree-biased cutting processes presents challenges, see Remark \ref{rmk: atom}. In our case, it is only possible to approximate the number of vertices removed using a jump distribution $\zeta$, leading to an stochastic upper bound for $K_n$ whose asymptotic growth is explicitly known. Our main result, \refT{thm: domination}, together with \refT{thm: J} show that $K_n$ grows faster than $X_n$ by at least by a factor of $\ln n$.
	
	An alternative approach to edge removal is the continuous-time process where the edges are deleted in the order determined by independent exponential clocks. This perspective draws a connection to coalescent processes, first observed in \cite{Goldschmidt2005} and explored further for the degree-biased  vertex-removal in Section~\ref{sec: Related}.
	\subsection{Properties of the degree-biased cutting process}\label{sec: Process}
	
	In what follows, it is relevant to make a difference between random recursive trees $T_n$ and their deterministic feasible values; thus, deterministic trees will be denoted with lowercase letters; e.g., $t\in I_n$.  
	We consider edges in increasing trees as being directed towards the root. In particular, for an increasing tree $t$ and vertex $v\in t$, we define the \emph{degree} of $v$, denoted $\deg_t(v)$, as the number of edges directed towards $v$. 
	If $v_1v_2$ is an edge of an increasing tree, we will implicitly assume that $v_1<v_2$. Whenever $v_1>1$, there is a unique outgoing edge from $v_1$ to a vertex (its parent) which we denote $v_0$.
	
	Let $t'$ be a tree with vertex set $\{v_1,\dots, v_m\}$, where $v_1<\dots<v_m$. We denote by $\Phi_m$ the function that relabels each $v_i\in t'$ with label $i$. In particular, if $t\in I_n$ and $t'\subset t$ then $\Phi_m(t')\in I_m$. In what follows, we abuse notation and write $\Phi(\cdot)=\Phi_m(\cdot)$ whenever the size of the tree is clear from the context. Finally, for a vertex $v\in t$, let $t^{(v)}$ denote the subtree rooted at $v$; i.e. $t^{(v)}\subset t$ contains $v$ and all vertices $w$ such that there is a directed path from $w$ to $v$. 
	The following definition of degree-biased cutting has the property that, at each step, the probability of deleting vertex $v$ is proportional to its degree.
	
	\begin{dfn}[Degree-biased cut]\label{dfn: cut}
		Let $t$ be an increasing tree. Choose an edge $v_1v_2$ uniformly at random and \emph{delete} vertex $v_1$ along with $t^{(v_1)}$. More precisely, if $v_1>1$, then remove the edge $v_0v_1$ and keep only the subtree containing the root of $t$; that is, keep $t^{root}:=t\setminus (t^{(v_1)}\cup v_0v_1)$. Otherwise, $v_1$ is the root of $t$, we let $t^{root}:=\emptyset$ and say that the cut \emph{erases the tree instantly}.  
	\end{dfn}
	
	A key property for our results is that the splitting property holds for the degree-biased cutting of random recursive trees.
	
	\begin{prop}[Splitting property]\label{prop: split}
		Let $n\ge 2$ and let $T_n$ be a random recursive tree. Let $T_n^{root}$ be the tree obtained after a degree-biased cut. For $\ell\in[n-2]$, conditionally given that $\vert T_n^{root}\vert=\ell$, $\Phi (T_n^{root})$ has the distribution of a random recursive tree of size $\ell$.
	\end{prop}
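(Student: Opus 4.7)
The plan is to prove the splitting property by a direct computation showing that $\p{\Phi(T_n^{root}) = t',\, |T_n^{root}| = \ell}$ does not depend on the particular $t' \in I_\ell$; dividing through by $\p{|T_n^{root}|=\ell}$ then yields the uniform distribution on $I_\ell$. Since $T_n$ is uniform on $I_n$, and since conditionally on $T_n = t$ a non-root vertex $v$ is deleted with probability $\deg_t(v)/(n-1)$ (the $\deg_t(v)$ edges whose larger endpoint is a child of $v$, out of $n-1$ total edges),
$$\p{\Phi(T_n^{root}) = t',\, |T_n^{root}| = \ell} \;=\; \frac{1}{(n-1)!\,(n-1)} \sum_{(t,v)} \deg_t(v),$$
where the sum runs over pairs $(t,v)$ with $t \in I_n$, $v$ a non-root vertex of $t$, and $\Phi(t \setminus t^{(v)}) = t'$.

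The next step is to reparameterize each admissible pair $(t,v)$ by a triple $(A, \tilde t, v_0)$, where $A \subset [n]$ is the vertex set of $t^{root}$ (so $1 \in A$ and $|A|=\ell$), $\tilde t := t^{(v)}$ is the subtree on $B := [n]\setminus A$, and $v_0 \in A$ is the parent of $v$ in $t$. Three observations would drive the argument. First, since labels strictly increase along paths from the root of $t$, necessarily $v = \min B$, so $\tilde t$ is automatically an increasing tree on $B$ (equivalently, $\Phi(\tilde t) \in I_{n-\ell}$). Second, $v_0$ ranges over $\{a \in A : a < v\}$, which has exactly $v-1$ elements, because $A$ must contain all of $\{1,\dots,v-1\}$. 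Third, for each fixed $A$ there is a \emph{unique} tree structure on $A$ whose $\Phi$-image equals $t'$ (pull back along the order-preserving bijection $[\ell] \to A$). Conversely, for any admissible triple $(A, \tilde t, v_0)$, gluing $\tilde t$ to this pulled-back tree on $A$ via the edge $v_0 v$ produces a valid element of $I_n$: both pieces are increasing, and the new edge satisfies $v_0 < v$.

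Because $\deg_t(v)$ equals the degree of $v$ in $\tilde t$, which after relabeling equals $\deg_{\Phi(\tilde t)}(1)$, it depends only on $\tilde t$ and not on $A$ or $v_0$. Consequently the sum factorizes as
$$\sum_{(t,v)} \deg_t(v) \;=\; \Bigl(\sum_{A} \bigl(\min([n]\setminus A) - 1\bigr)\Bigr) \cdot \sum_{s \in I_{n-\ell}} \deg_s(1),$$
with $A$ ranging over $\ell$-subsets of $[n]$ containing $1$. This expression is visibly blind to the particular $t' \in I_\ell$: only the size $\ell$ enters. Therefore $\p{\Phi(T_n^{root}) = t',\, |T_n^{root}|=\ell}$ takes the same value for every $t' \in I_\ell$, which upon conditioning on the event $\{|T_n^{root}| = \ell\}$ gives the uniform distribution on $I_\ell$.

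The main obstacle I foresee is purely bookkeeping: checking carefully that the correspondence $(t,v) \leftrightarrow (A, \tilde t, v_0)$ is a bijection onto the triples satisfying the stated constraints, and that the unique tree on $A$ compatible with $t'$ always splices cleanly with $\tilde t$ via the edge $v_0 v$. Both directions hinge on the single observation that $\min B > \max\{a \in A : a < \min B\}$, so the increasing property is never violated by the gluing. Once this is laid out precisely, the independence from $t'$ falls out of the factorization above.
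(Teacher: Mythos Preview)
Your proposal is correct and follows essentially the same route as the paper. The paper packages the computation as a separate lemma giving the full joint law of $(\Phi(T_n^{(v_1)}),\Phi(T_n^{root}))$, parameterizing the admissible trees $t$ by the $(k+1)$-set $L=\{v_0,v_1,\ldots,v_k\}$ (the removed subtree together with its parent), and then marginalizes over $t'\in I_k$; your parameterization by $(A,\tilde t,v_0)$ is the dual of this (your unevaluated factor $\sum_A(\min([n]\setminus A)-1)$ is exactly the paper's count $|C_{t'}|=\binom{n}{k+1}$, via the bijection $L\mapsto(L\setminus\{\min L\},\min L)$). The only practical difference is that the paper's finer joint statement also yields the size distribution (Proposition~\ref{prop: size}) for free, whereas your direct marginal argument establishes the splitting property alone without evaluating either factor---which is all that is needed here.
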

	
	Note that the cutting process removes at least both ends of the selected edge. Consequently, the size of each cut satisfies $2\le |T_n^{(v_1)}|\le n$. Moreover, we obtain its distribution by exploiting the fact that $\E{\deg_{T_n}(1)}=H_{n-1}$, where $H_n=\sum_{i=1}^n i^{-1}$ is the $n$-th \emph{harmonic number}.
	
	\begin{prop}[Size of a cut]\label{prop: size}
		Let $n\ge 2$ and let $T_n$ be a random recursive tree. Let $T_n^{(v_1)}$ be the tree defined by a degree-biased cut of $T_n$; where $v_1v_2$ is a uniformly chosen edge of $T_n$. The distribution of $|T_n^{(v_1)}|$ is given by
		
		\begin{align}\label{eq: size}
			\p{|T_n^{(v_1)}|=k}=
			\frac{nH_{k-1}}{(n-1)(k+1)k}\I{2\le k<n} +
			\frac{H_{n-1}}{n-1} \I{k=n}.
		\end{align}
	\end{prop}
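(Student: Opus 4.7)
The plan is to split on whether the selected endpoint $v_1$ equals the root and, in the non-root case, rewrite the probability as an expectation involving degrees and subtree sizes. Throughout I will rely on two classical features of random recursive trees: the branching property, which says that conditionally on $|T_n^{(v)}|=k$ the relabelled subtree $\Phi(T_n^{(v)})$ is a random recursive tree on $[k]$, and the expected root degree $\E{\deg_{T_k}(1)}=H_{k-1}$.

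First, I would handle the atom at $k=n$. The event $|T_n^{(v_1)}|=n$ occurs exactly when $v_1=1$, i.e.\ when the uniformly chosen edge is incident to the root. Since $T_n$ has $n-1$ edges and $\deg_{T_n}(1)$ of them touch vertex $1$, one obtains $\p{|T_n^{(v_1)}|=n}=\E{\deg_{T_n}(1)}/(n-1)=H_{n-1}/(n-1)$. For $2\le k<n$ the endpoint must satisfy $v_1>1$, and for every such $v_1$ with $|T_n^{(v_1)}|=k$ there are $\deg_{T_n}(v_1)$ choices of partner $v_2$, so
\begin{align*}
\p{|T_n^{(v_1)}|=k}=\frac{1}{n-1}\,\E{\sum_{v=2}^{n}\deg_{T_n}(v)\,\I{|T_n^{(v)}|=k}}.
\end{align*}
Conditioning each indicator on $|T_n^{(v)}|=k$ and invoking the branching property together with $\E{\deg_{T_k}(1)}=H_{k-1}$ pulls the factor $H_{k-1}$ outside the expectation, reducing the statement to the combinatorial identity
\begin{align*}
\E{N_k(T_n)}=\frac{n}{k(k+1)},\qquad N_k(T_n):=\#\{v\in[n]:|T_n^{(v)}|=k\},
\end{align*}
for every $1\le k<n$.

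I would prove this last identity by induction on $n$, conditioning on the attachment vertex $U_n$ of vertex $n$ in the sequential construction of $T_n$. Adding vertex $n$ introduces a fresh subtree of size one and, for every ancestor $a$ of $U_n$, shifts $|T_{n-1}^{(a)}|$ up by one; since $\p{U_n\in T_{n-1}^{(a)}\mid T_{n-1}}=|T_{n-1}^{(a)}|/(n-1)$, taking expectations produces the recursion
\begin{align*}
\E{N_k(T_n)}=\frac{n-1-k}{n-1}\E{N_k(T_{n-1})}+\frac{k-1}{n-1}\E{N_{k-1}(T_{n-1})}+\I{k=1},
\end{align*}
for which the ansatz $n/(k(k+1))$ is easily verified by substitution, with immediate base cases $\E{N_1(T_2)}=1$ and $\E{N_k(T_{k+1})}=1/k$. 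Substituting back into the display above yields the first summand of \eqref{eq: size}. The step I expect to require the most care is the bookkeeping in setting up this recursion; the branching property invoked just before is classical and parallels the reasoning behind Proposition~\ref{prop: split}.
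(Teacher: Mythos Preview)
Your argument is correct, and it follows a genuinely different route from the paper's. The paper first establishes a joint law (Lemma~\ref{lem: joint}): for $t'\in I_k$, $t''\in I_{n-k}$ it shows by a direct counting of the set $C_{t'}$ of compatible trees (of cardinality $\binom{n}{k+1}$) that
\[
\p{\Phi(T_n^{(v_1)})=t',\ \Phi(T_n^{root})=t''}=\frac{n\,d_{t'}(1)}{(n-1)(k+1)!(n-k-1)!},
\]
and then obtains \eqref{eq: size} by summing first over $t'\in I_k$ (which produces $H_{k-1}$ via $\sum_{t'} d_{t'}(1)/(k-1)!=H_{k-1}$) and then over $t''\in I_{n-k}$. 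In other words, the paper bundles the whole computation into one combinatorial identity and marginalises.

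You instead write $\p{|T_n^{(v_1)}|=k}$ as $(n-1)^{-1}\E{\sum_{v\ge 2}\deg_{T_n}(v)\I{|T_n^{(v)}|=k}}$, invoke the fixed-vertex branching property to extract $H_{k-1}$, and reduce everything to the classical count $\E{N_k(T_n)}=n/(k(k+1))$, which you prove by the one-step recursion coming from the sequential construction. Both proofs ultimately hinge on the same two facts, $\E{d_{T_k}(1)}=H_{k-1}$ and the $\binom{n}{k+1}$-type enumeration of where the cut subtree can sit, but they package them differently. The paper's route has the advantage that Lemma~\ref{lem: joint} simultaneously yields the splitting property (Proposition~\ref{prop: split}) and the size law; your route separates these and instead recovers, as a by-product, the well-known expected subtree-size profile of $T_n$, via a clean probabilistic recursion. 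One small remark on your induction: when $k=n-1$ the recursion calls $\E{N_{n-1}(T_{n-1})}$, which equals $1$ and does not match the ansatz, but its coefficient $(n-1-k)/(n-1)$ vanishes there, so the induction still goes through; it is worth saying this explicitly.
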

	
	We note in passing that the probability that the degree-biased cutting procedure ends in one step has a relatively large mass coming from an \emph{instant deletion} of the tree; see Remark~\ref{rmk: atom}. 
	
	\begin{dfn}[Degree-biased process]\label{dfn: process}
		Let $n\ge 2$ and $T_n$ be a random recursive tree. Sequentially perform degree-biased cuts until the resulting tree (or empty set) cannot be cut anymore; at which point we say that $T_n$ was \emph{erased}. More precisely, let  $(G_t)_{t\ge 0}$ be a sequence of trees such that $G_0=T_n$ and, for $t\ge 1$, if $|G_{t-1}|\ge 2$, then $G_t=G_{t-1}^{root}$; otherwise, $G_t=G_{t-1}$. The number of cuts needed to erase $T_n$ is defined by
		\begin{align*}
			K_n=\min\{t\ge 1: |G_t|\in \{0,1\}\};
		\end{align*}
		the halting cases correspond to either instantly erasing the tree or isolating the root.
	\end{dfn}
	
	\subsection{Main results}\label{sec: Main}
	
	Let $\{\xi_i: i\in\N\}$ be a sequence of independent copies of a random variable $\xi$ 
	taking positive integer values and $n\in \N$. The \emph{random walk with a barrier} at $n$, $\{R^{(n)}_i:i\in\N_0\}$ is defined recursively as $R^{(n)}_0=0$ and, for $i\geq 1$, \[R^{(n)}_i:= R^{(n)}_{i-1}+ \xi_i\mathbf{1}_{\{R_{i-1}^{(n)}+\xi_i< n\}}. \] 
	Throughout this section, consider $\xi=\zeta$ with mass probability, for $k\ge 2$,
	\begin{align}\label{dfn: z}
		\p{\zeta=k}= \frac{H_{k-1}}{k(k+1)}.
	\end{align}
	
	Let $J_n$ denote the number of jumps of the process $\{R^{(n+1)}_i : i\in\N_0\}$ when $\xi=\zeta$; that is, $J_n:= \sum_{i\geq 0} \mathbf{1}_{\{R^{(n+1)}_{i-1}+\xi_{i}\le n\}}$. 
	\refP{prop: split} allows us to obtain a recursive formula for $\K_n$ and to compare $\K_n$ with $J_n$, akin to \cite[Lemma 1]{Iksanov2007}.
	Our main result states that $J_n$ may be interpreted as a stochastic overcount of $K_n$.  	
	\begin{thm}\label{thm: domination}
		$J_n$ stochastically dominates $\K_n$.
	\end{thm}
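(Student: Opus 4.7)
The plan is to construct an explicit coupling on which $K_n \leq J_n$ holds almost surely. The starting point is to rewrite the single-cut law of \refP{prop: size} as a two-stage mixture. First I would verify by telescoping (using $\frac{1}{k(k+1)} = \frac{1}{k} - \frac{1}{k+1}$) that
\begin{equation*}
\p{\zeta \geq n} = \frac{H_{n-1} + 1}{n}.
\end{equation*}
Setting $\alpha_N := N\,\p{\zeta \leq N}/(N-1)$, a direct calculation (reducing to $H_N \geq 1/N$) gives $\alpha_N \in (0, 1]$ and shows that the cut size $\xi$ on a size-$N$ tree satisfies
\begin{equation*}
\xi \eqdist \alpha_N \cdot (\zeta \mid \zeta \leq N) + (1 - \alpha_N) \cdot \delta_N.
\end{equation*}
Equivalently, $\xi$ is sampled by drawing $Z \sim \zeta \mid \zeta \leq N$ and $U \sim \Unif{[0,1]}$ independently, and setting $\xi = Z$ if $U \leq \alpha_N$, and $\xi = N$ (``instant destruction'') otherwise.

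Using this mixture I would couple both processes via a common source of randomness: let $(\zeta_i)_{i \geq 1}$ be i.i.d.\ copies of $\zeta$ and $(U_t)_{t \geq 1}$ be i.i.d.\ $\Unif{[0,1]}$ variables, mutually independent. The process $J_n$ is the greedy first-fit packing of the $\zeta_i$'s into capacity $n$. For $K_n$, at step $t$ with current tree size $N_{t-1}$, let $Z_t$ be the smallest yet-unconsumed index $i$ with $\zeta_i \leq N_{t-1}$, and set $\xi_t = \zeta_{Z_t}$ (consuming that index) if $U_t \leq \alpha_{N_{t-1}}$, else set $\xi_t = N_{t-1}$ (halting). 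By the mixture formula and \refP{prop: split}, this construction reproduces the correct law of $K_n$.

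The pathwise inequality $K_n \leq J_n$ then follows from a short bookkeeping induction: while $K_n$ has not halted, the current tree size $N_{t-1}$ equals $J_n$'s current capacity $C_{t-1}$, and the ``smallest fitting index'' rule common to both processes forces the indices consumed by $K_n$ to coincide with those accepted by $J_n$. If $K_n$ terminates without instant destruction, the consumed $\zeta$'s sum to $n - 1$, so $J_n$ has remaining capacity $1$ and halts at the same step, giving $K_n = J_n$. If instead $K_n$ instant-destroys at step $t^*$, then $K_n = t^*$; either $J_n$ accepts the matching $\zeta_{Z_{t^*}} = N_{t^*-1}$ and halts itself (so $J_n = t^*$), or its capacity after accepting $\zeta_{Z_{t^*}}$ remains $\geq 2$ and almost surely at least one further jump is accepted ($J_n \geq t^* + 1$). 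In every case $J_n \geq t^* = K_n$.

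The main obstacle is the mixture decomposition itself, which reduces to the tail formula above and the bound $\alpha_N \leq 1$; both are short harmonic-number calculations. Once the coupling is in place, the remaining comparison is essentially a mechanical matching between the two first-fit selection rules.
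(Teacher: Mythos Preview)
Your coupling argument is correct and gives a genuinely different, more constructive proof than the one in the paper. The paper works entirely at the level of distributions: it records the pointwise inequality
\[
\p{\zeta=k\mid \zeta\le n}\ \ge\ \p{\zeta^{(n)}=k},\qquad 2\le k<n,
\]
which is equivalent to your bound $\alpha_N\le 1$, and then feeds this into the two recursions
\[
\p{J_n\ge j}=\sum_{k=2}^{n-2}\p{\zeta=k\mid\zeta\le n}\,\p{J_{n-k}\ge j-1},\qquad
\p{K_n\ge j}=\sum_{k=2}^{n-2}\p{\zeta^{(n)}=k}\,\p{K_{n-k}\ge j-1},
\]
finishing by a double induction on $j\ge 2$ and $n\ge 4$. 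Your mixture decomposition $\zeta^{(N)}\eqdist \alpha_N\cdot(\zeta\mid\zeta\le N)+(1-\alpha_N)\,\delta_N$ is the structural observation behind that inequality, and it lets you build an explicit pathwise coupling with $K_n\le J_n$ almost surely, rather than just comparing tail probabilities. The paper's route is shorter; yours yields a stronger statement and makes transparent \emph{why} the domination holds --- every unit of excess mass in $\zeta^{(N)}$ relative to $(\zeta\mid\zeta\le N)$ sits at the atom $N$ and thus only accelerates termination.

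Two minor points that do not affect correctness. First, the claim that $\zeta_{Z_t}$ has the conditional law $(\zeta\mid\zeta\le N_{t-1})$ given the history deserves one line of justification: since $Z_1<Z_2<\cdots$ (rejected indices never become admissible later, as capacities decrease), the unconsumed tail beyond $Z_{t-1}$ is an i.i.d.\ $\zeta$-sequence independent of the past, so the first admissible value has the claimed conditional law. Second, in the instant-destruction case your dichotomy omits the possibility that $J_n$'s residual capacity after accepting $\zeta_{Z_{t^*}}$ equals exactly $1$; but then $J_n=t^*=K_n$ as well, and in any event the inequality $J_n\ge t^*$ already follows from the fact that $J_n$ accepts $\zeta_{Z_{t^*}}$, so the extra case analysis is not needed.
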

	
	Unfortunately, Proposition \ref{prop: size} implies that the degree-biased cutting places some extra mass of instant deletion at each step of the process, preventing $J_n$ from being a good approximation of $K_n$; see Remark~\ref{rmk: atom}. Nevertheless, 
	the next result gives us insight into the deletion time $K_n$.    
	Using results in \cite{Iksanov2008}, which are summarized in Section~\ref{sec: Jumps},  we obtain the asymptotic behavior of both the mean and the limiting distribution of $J_n$ under an appropriate normalization.

	\begin{thm}\label{thm: J}
		As $n$ goes to infinity, $\E{J_n}\sim 2n(\ln n)^{-2} $ and, in probability, 
		\[\frac{J_n}{\E{J_n}}\stackrel{p}{\longrightarrow} 1, \]
		and
		\[\frac{J_n}{n}(\ln{n})^3-2\ln{n}  \]
		converges in distribution to a random variable $4X$, where $X$ is a random variable with characteristic function 
		\begin{equation*}
			\E{e^{itX}}=\exp\left(it\ln |t|-\frac{\pi}{2}|t| \right).
		\end{equation*} 
	\end{thm}

	In conclusion, the degree-biased cutting process results in quicker deletion times compared to the random cutting approach, at least by a factor of $\ln n$ in the first order of $X_n$; compare \refT{thm: J} with \eqref{eq: esp Xn} and \eqref{eq: conv Xn}.
	In contrast, the degree-biased procedure is significantly slower when compared to deleting vertices deterministically based on their ordered degrees, as in the targeted vertex-cutting process, where the number of required cuts has asymptotic growth at most $n^{1-\ln 2}$ \cite{Eslava2025}.
	
	Further research into this topic includes obtaining tighter bounds for the deletion time of both the degree-biased cutting process and the targeted vertex-cutting. In addition, it would be interesting to develop a better understanding of the asymptotic properties of the size of the subtrees removed at each step of the degree-biased cutting procedure, as this is key to further understanding the properties of the related coalescent process.
	
	The outline for the paper is the following. 
	In \refS{sec: Jumps} we review the results for random walks with a barrier at $n$ \cite{Iksanov2008} and place the known properties of $X_n$ into such context. 
	
	In \refS{sec: biased cuts} we 
	establish Propositions \ref{prop: split} and \ref{prop: size}; namely, the splitting property and the distribution of the mass lost at each degree-biased cut of the process. To do so, we compute the joint probability of $T^{root}$ and $T^{(v_1)}$, the tree removed after the first degree-biased cut, see \refL{lem: joint}; and use a first-step analysis to recover a recursive formula for $\K_n$, see \refR{prop: recursive}. Section \ref{sec: jump process} contains the proofs of Theorems \ref{thm: J} and \ref{thm: domination}.
	
	Finally, in Section \ref{sec: Related} we briefly explore the coalescent process associated with the degree-biased cutting procedure.

	{\bf Notation.}
	We use $|A|$ to denote the cardinality of a set $A$. For $n\in \N$ we write $[n]:=\{1,2,\dots, n\}$. $H_n=\sum_{i=1}^n i^{-1}$ is the $n$-th \emph{harmonic number}. We denote natural logarithms by $\ln$. For real functions $f,g$ we write $f(x)\sim g(x)$ when $\lim_{x\to\infty}f(x)/g(x)=1$, $f(x)=o(g(x))$ when $\lim_{x\to \infty}  f(x)/g(x) =0$ and $f(x)=O(g(x))$ when $|f(x)/g(x)|\le C$ for some $C>0$. We use $\stackrel{p}{\longrightarrow}$ 
	to denote convergence in probability. 
	
	\section{Jumps for random walks with a barrier}\label{sec: Jumps}
	
	Recall the process $\{R^{(n)}_i:i\in\N_0\}$, defined in \refS{sec: Main}, it follows from  the definition that the process $\{R^{(n)}_i : i\in\N_0\}$ has non-decreasing paths and satisfies $R_i^{(n)} < n$ for all $i \in \N_0$. Let $M_n= |\{i\in\N \vert R^{(n)}_{i-1}\neq R^{(n)}_i\}|= \sum_{i\geq 0} \mathbf{1}_{\{R^{(n)}_{i-1}+\xi_{i}< n\}}$ denote the number of jumps of the process $\{R^{(n)}_i : i\in\N_0\}$. Note that $M_n\le n$ since $\xi\ge 1$ with probability one. 
	
	The asymptotic behavior of $M_n$ has been studied when $\xi$ is distributed as in \eqref{eq: z} in \cite{Iksanov2007} and for broader classes of $\xi$ in \cite{Iksanov2008}; in particular, for the case of an infinite-mean $\xi$. In the latter, the results are stated under the assumption of $\xi$ having a probability atom at $1$; however, a straightforward adaptation allows us to present their results without such restriction. 
	
	\begin{prop}[Theorem 1.1 in \cite{Iksanov2008}]\label{prop: esp Mn}
		If $\sum_{j=1}^n \p{\xi\ge j}\sim \ell(n)$ for some  function $\ell$ slowly varying function at infinity then, as $n\to \infty$, $\E{M_n}\sim n/\ell(n)$ and, in probability,
		\begin{align*}
			\frac{M_n}{\E{M_n}} \stackrel{p}{\longrightarrow} 1. 
		\end{align*}
	\end{prop}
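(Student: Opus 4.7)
The plan is to adapt the argument in \cite{Iksanov2008} by first deriving a renewal equation for $b_n=\E{M_n}$, then invoking a Tauberian/Erickson-type renewal theorem to extract the first-order asymptotic, and finally establishing concentration through a variance estimate. Conditioning on $\xi_1$: if $\xi_1<n$, the jump is accepted and $R_1^{(n)}=\xi_1$, so the number of future jumps has distribution $M_{n-\xi_1}$; if $\xi_1\ge n$, the jump is rejected, $R_1^{(n)}=0$, and the process starts over. This yields
\[
\p{\xi<n}\,b_n = \p{\xi<n} + \sum_{k=1}^{n-1}\p{\xi=k}\,b_{n-k},
\]
a defective renewal equation for the sequence $(b_n)$.

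Next I would relate $b_n$ to the ordinary renewal function $U(n)=\sum_{k\ge 0}\p{S_k\le n}$ associated with the partial sums $S_k=\xi_1+\cdots+\xi_k$. Under the hypothesis $\sum_{j=1}^n \p{\xi\ge j}\sim \ell(n)$, Erickson's renewal theorem for infinite-mean, slowly varying truncated first moment applies and yields $U(n)\sim n/\ell(n)$. A direct comparison (an accepted jump of the barrier walk corresponds almost always to a renewal epoch of $(S_k)$ landing below the barrier) then gives $b_n\sim n/\ell(n)$. The gap between $U(n)$ and $b_n$ must be controlled by showing that rejected jumps contribute an asymptotically negligible fraction: rejection at state $R$ requires $\xi>n-R$, and $\p{\xi>n-R}$ is negligible for $R$ not too close to $n$, which is where the walk spends the overwhelming majority of its accepted steps by Karamata's theorem.

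For the convergence $M_n/\E{M_n}\stackrel{p}{\longrightarrow} 1$, I would establish an analogous recursion for $\E{M_n^2}$, or equivalently for $\V{M_n}$, and prove that $\V{M_n}=o\bigl((n/\ell(n))^2\bigr)$; Chebyshev's inequality then closes the argument. An alternative that I would keep in reserve is to exploit the regenerative structure each time the barrier walk restarts after a rejection: this presents $M_n$ as a sum with a random but controllable number of i.i.d.-like contributions, to which a weak law for triangular arrays may be applied.

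The main obstacle is the infinite-mean regime, where classical renewal identities, Wald's identity, and the elementary renewal theorem all fail; one must instead lean on Karamata's theorem for slowly varying functions and on Erickson's extension of the renewal theorem to handle both $\p{\xi\ge n}$ and $\E{\xi\wedge n}$ consistently. Removing the atom-at-$1$ assumption present in \cite{Iksanov2008} is a routine adaptation, since the proofs there depend only on the slowly varying asymptotic of $\sum_{j\le n}\p{\xi\ge j}$; any minor lattice or boundary effects at small values of $\xi$ are absorbed into lower-order corrections.
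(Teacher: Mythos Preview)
The paper does not actually prove this proposition: it is quoted as Theorem~1.1 of \cite{Iksanov2008}, and the paper only records a one-paragraph description of the strategy used there. That description is: couple the barrier walk $(R^{(n)}_i)$ with the free random walk $S_i=\sum_{j\le i}\xi_j$ using the same increments, study the first-passage time $N_n=\inf\{i:S_i\ge n\}$, and show that under this coupling $M_n-N_n$ is negligible compared to the first order of $N_n$. Both $\E{M_n}\sim n/\ell(n)$ and $M_n/\E{M_n}\stackrel{p}{\to}1$ are then inherited from the corresponding (classical) statements for $N_n$.

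Your proposal reaches the same endpoint by a somewhat different organization. You derive the one-step recursion for $b_n=\E{M_n}$ (which is correct), feed it into Erickson's infinite-mean renewal theorem via the renewal function $U(n)=\sum_{k\ge0}\p{S_k\le n}$, and then treat concentration separately through a second-moment recursion and Chebyshev. The comparison you sketch between accepted barrier-walk jumps and renewal epochs of $(S_k)$ below $n$ is essentially the coupling observation in the paper's description, just phrased in terms of $U(n)$ rather than $N_n$ (and $\E{N_n}$ equals $U(n-1)$ up to boundary conventions, so these are equivalent at first order). The practical difference is that the Iksanov--M\"ohle coupling handles the mean and the weak law simultaneously: once $|M_n-N_n|=o_p(n/\ell(n))$ both conclusions drop out at once, whereas your route requires an extra variance computation. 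Your plan is sound, but the coupling with $N_n$ is the more economical packaging, and it is what the paper attributes to \cite{Iksanov2008}. Your remark about dropping the atom-at-$1$ hypothesis matches the paper's own comment that this is a straightforward adaptation.
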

	
	\begin{prop}[Theorem 1.4 in \cite{Iksanov2008}]\label{prop: conv Mn}
		If $\p{\xi\geq n}\sim \frac{\ell(n)}{n}$ for  some function $\ell(n)$ slowly varying at infinity and $\E{\xi}=\infty$, the normalization \[ \frac{M_n-b(n)}{a(n)}\] converges in distribution to a random variable $X$ with characteristic function \[\E{e^{itX}}=\exp\left(it\ln |t|-\frac{\pi}{2}|t| \right)\] whenever the following conditions hold for positive functions $a(x),b(x)$ and $c(x)$: 
		\begin{itemize}
			\item[\it i)] $\lim_{x\to\infty}x\p{\xi \geq c(x)}=1$,
			\item[\it ii)] $b(\phi(x))\sim\phi(b(x))\sim x$ where 
			$$\phi(x):=x\int_{0}^{c(x)} \p{\xi>y}dy;$$
			\item[\it iii)] $a(x)\sim x^{-1}b(x)c(b(x))$.
		\end{itemize}
	\end{prop}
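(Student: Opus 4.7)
The plan is to recast $M_n$ as a perturbation of the first-passage time of an unconstrained heavy-tailed random walk and then invoke the stable-law CLT in the $\alpha=1$ domain of attraction. Let $S_k:=\xi_1+\cdots+\xi_k$ and $N_n:=\min\{k\ge 1:S_k\ge n\}$. Driven by the same $\xi_i$'s, the barriered walk agrees with the unconstrained walk up to time $N_n-1$, then rejects the $N_n$-th jump and re-attempts with $\xi_{N_n+1}$, so one can write $M_n=(N_n-1)+R_n$, where $R_n$ counts the subsequent accepted jumps before $M_n$ stabilises.

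First I would establish the Cauchy limit for $N_n$. The hypothesis $\p{\xi\ge n}\sim \ell(n)/n$ together with $\E{\xi}=\infty$ places $\xi$ in the $\alpha=1$ stable domain of attraction, so there exist centering $d_k$ and scaling $e_k$ (with $e_k\sim c(k)$ and $d_k\sim k\int_0^{c(k)}\p{\xi>y}\,dy$) for which $(S_k-d_k)/e_k$ converges to a Cauchy law. Since $\{N_n>k\}=\{S_k<n\}$, one inverts the Cauchy CLT via the substitution $k=b(n)+a(n)x$: condition (i) pins $c$ as a tail quantile, (ii) asserts that $b$ and $\phi$ are asymptotic inverses (so $b$ matches the centering $d_k$), and (iii) identifies $a(n)$ with the Cauchy scaling $e_{b(n)}$. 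The monotonicity of $k\mapsto S_k$ then transfers the Cauchy limit to $(N_n-b(n))/a(n)$.

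Second I would show $R_n=o_P(a(n))$. Once the walk sits at distance $d>0$ from the barrier, a successful jump has probability $\p{\xi<d}$, and the endgame becomes a small-step renewal process starting from a level close to $n$. Using the tail hypothesis and Karamata's theorem for the truncated mean, the expected number of additional accepted jumps is comparable to a renewal quantity that is much smaller than $a(n)$; a first-moment bound, combined with a truncation separating the trajectory into its bulk phase (well below $n-c(n)$) and its endgame, makes this uniform in the location of the walk and yields $R_n=o_P(a(n))$. Combined with the first step, Slutsky's theorem gives the Cauchy limit for $M_n$.

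The main obstacle is the endgame estimate, particularly in removing the atom assumption $\p{\xi=1}>0$ used in \cite{Iksanov2008}. With such an atom the endgame is dominated by a geometric cascade of unit jumps, which is trivial to control; without it, the final climb to the barrier can be delayed by the walk repeatedly drawing moderately large $\xi_i$'s that are rejected. Handling this requires a coupling of the barriered walk near $n$ with the unrestricted renewal process of $\xi$, together with uniformity estimates on the slowly varying function $\ell$, to confirm that the endgame length remains of lower order than $a(n)$. The other verifications reduce to routine manipulations with de Haan's class once conditions (i)--(iii) are in place.
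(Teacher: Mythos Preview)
The paper does not give its own proof of this statement: Proposition~\ref{prop: conv Mn} is quoted as Theorem~1.4 of \cite{Iksanov2008}, and the paper only summarises the method in a short paragraph. That summary says exactly what you propose: couple $M_n$ with the first-passage time $N_n=\inf\{i:S_i\ge n\}$ of the unconstrained walk $S_i=\sum_{j\le i}\xi_j$, use the known stable/Cauchy limit for $N_n$, and show that $M_n-N_n$ is negligible compared to the normalisation. Your decomposition $M_n=(N_n-1)+R_n$, the inversion of the $\alpha=1$ stable CLT via conditions (i)--(iii), and the endgame estimate $R_n=o_P(a(n))$ are precisely the ingredients the paper attributes to \cite{Iksanov2007,Iksanov2008}; there is no alternative argument here to compare against.
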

	
	The proofs of Propositions \ref{prop: esp Mn} and \ref{prop: conv Mn} rely on a coupling of the process $R^{(n)}_i$ with the classic random walk $S_i=\sum_{j=1}^i \xi_j$. Observe that the jumps of $S_i$ correspond precisely to the independent variables $\{\xi_i: i\in \N\}$; while the jumps of $R^{(n)}_i$, being conditioned to not exceed the barrier at $n$, lose both the independence and identical distribution property. 
	
	The limiting distribution of $N_n:= \inf\{i\in\N: S_i\geq n \}$ is well understood and it depends on the tails of $\xi$; moreover, $N_n$ may be interpreted as the number of jumps of $S_i$ before exceeding the barrier at $n$. The key idea in \cite{Iksanov2007,Iksanov2008} is to prove that the difference $M_n-N_n$, under a natural coupling of $(R^{(n)}_i, S_i)$, is negligible with respect to the first order of $N_n$.
	
	Other related works on the domain of attraction of $\alpha$-stable distributions are the following. Erickson studies the asymptotic behavior of the spent and residual times associated with a renewal process \cite{Erickson70}. Geluk and Haan derive the theory of stable probability distributions and their domains of attraction via Fourier transforms \cite{Geluk97}.
	Berger surveys the case of random walks in the Cauchy domain of attraction \cite{berger_notes_2019}.
	
	\subsection{Uniform random cuts}\label{sec: Xn}
	
	Consider $T_n$ a random recursive tree on $n$ vertices and perform a random cut; then the number of vertices removed is given by $\xi^{(n)}$, with 
	\begin{equation}\label{eq: jump xi}
		\p{\xi^{(n)}=k}=\frac{n}{(n-1)k(k+1)},
	\end{equation}
	for $k\in [n-1]$; see e.g. \cite{Meir1974}. On the other hand, let $\xi$ be defined, for $k\in \N$, by 
	\begin{equation}\label{eq: z}
		\p{\xi=k}=(k(k+1))^{-1}.
	\end{equation}
	Then $\p{\xi\ge n}=n^{-1}$ and the distribution of $\xi$ conditionally given $\xi<n$, coincides with the distribution of $\xi^{(n)}$. It follows that $M_n$ corresponds precisely to $X_n$, the number of uniform random cuts required to isolate the root of $T_n$. 
	
	The known properties of $X_n$ may be recovered using Propositions~\ref{prop: esp Mn} and \ref{prop: conv Mn}. Namely, as $n$ goes to infinity, $\E{X_n}\sim n(\ln n)^{-1}$, and in probability, 
	\begin{equation}\label{eq: esp Xn}
		\frac{X_n}{\E{X_n}} \stackrel{p}{\longrightarrow}1;
	\end{equation}
	moreover,  
	\begin{align}\label{eq: conv Xn}
		\frac{(\ln n)^2}{n}X_n-\ln n-\ln(\ln n)    
	\end{align}
	converges in distribution to a random variable with a spectrally negative stable distribution.
	
	\section{Analysis of the degree-biased cutting process}\label{sec: biased cuts}
	
	Consider a random recursive tree $T_n$, $n\ge 2$. In what follows, we let $v_1v_2$ be a uniformly chosen edge in $T_n$ so that, after performing a degree-biased cut to $T_n$, we obtain trees $T_n^{(v_1)}$ and $T_n^{root}$. The following proposition computes the joint distribution of their relabeled versions.
	
	\begin{lem}\label{lem: joint}
		Let $T_n$ be a random recursive tree of size $n$ and $T_n^{root}$, $T_n^{(v_1)}$ be the resulting subtrees of a degree-biased cut. For $2\leq k<n$, $t'\in I_{k}$ and $t''\in I_{n-k}$  we have that
		
		\begin{equation}\label{eq: joint}
			\p{\Phi(T_n^{(v_1)})=t', \Phi(T_n^{root})=t''}=\dfrac{n \text{deg}_{t'}(1)}{(n-1)(k+1)!(n-k-1)!};
		\end{equation}
		while, for $k=n$ and $t'\in I_n$,
		\begin{equation}\label{eq: Ts=0}
			\p{\Phi(T_n^{(v_1)})=t'}=\frac{\text{deg}_{t'}(1)}{(n-1)(n-1)!}.
		\end{equation}
	\end{lem}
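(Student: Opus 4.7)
The plan is to reduce the joint probability to a combinatorial count over pairs (increasing tree, chosen edge). Since $T_n$ is uniform on $I_n$ and the edge is independently uniform on the $n-1$ edges of $T_n$, we have $\p{T_n = t} = 1/(n-1)!$ for each $t \in I_n$ and, given $T_n = t$, the probability that the selected edge has a specified vertex $v_1 \in t$ as its parent endpoint equals $\deg_t(v_1)/(n-1)$; this is precisely where the degree-bias appears. Hence, for $2 \le k < n$,
\begin{align*}
\p{\Phi(T_n^{(v_1)}) = t',\ \Phi(T_n^{root}) = t''} = \sum_{(t, v_1)} \frac{\deg_t(v_1)}{(n-1)!\,(n-1)},
\end{align*}
where the sum ranges over pairs $(t, v_1)$ with $t \in I_n$, $\Phi(t^{(v_1)}) = t'$ and $\Phi(t \setminus t^{(v_1)}) = t''$. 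Because $v_1$ is the root of $t^{(v_1)}$, the order-preserving relabeling sends it to label $1$ of $t'$, so $\deg_t(v_1) = d_{t'}(1)$ and the summand is constant across all valid $(t, v_1)$.

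What remains is to count such pairs. I would encode each $(t, v_1)$ by the data $(U, v_0)$, where $U \subset [n]$ is the label set of $t^{(v_1)}$ in $t$ (so $|U| = k$ and $v_1 = \min U$ by the increasing property) and $v_0$ is the parent of $v_1$ in $t$ (so $v_0 \in [n] \setminus U$ with $v_0 < \min U$). Conversely, given any $(U, v_0)$ with $|U| = k$ and $v_0 < \min U$, one reconstructs a unique $t \in I_n$ by placing a copy of $t'$ on $U$ and of $t''$ on $[n] \setminus U$ using the two order-preserving bijections, and inserting the single connecting edge $v_0 \min U$. The result lies in $I_n$ precisely because $v_0 < \min U$ forces $1 \notin U$ (so $1$ remains the root) and makes the new edge increasing. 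The admissible pairs $(U, v_0)$ are in bijection with $(k+1)$-subsets $W \subset [n]$ via $W = U \cup \{v_0\}$, $v_0 = \min W$, yielding $\binom{n}{k+1}$ of them.

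Substituting gives
\begin{align*}
\p{\Phi(T_n^{(v_1)}) = t',\ \Phi(T_n^{root}) = t''} = \binom{n}{k+1}\cdot\frac{d_{t'}(1)}{(n-1)!\,(n-1)} = \frac{n\,d_{t'}(1)}{(n-1)(k+1)!(n-k-1)!},
\end{align*}
which is \eqref{eq: joint}. The case $k=n$ can only occur with $v_1 = 1$: the only reconstruction is $t = t'$, so \eqref{eq: Ts=0} drops out from $\p{T_n=t'}$ times $d_{t'}(1)/(n-1)$. The main delicate point, and the step I would verify most carefully, is the reconstruction: checking that gluing $t'$ on $U$ and $t''$ on $[n] \setminus U$ via the connecting edge $v_0 \min U$ truly yields an increasing tree whose degree-biased cut at $v_1 = \min U$ recovers exactly the given $(t', t'')$, so that the combinatorial count via $(U, v_0)$ matches the probabilistic sum over $(t, v_1)$ with no hidden constraints or double counting.
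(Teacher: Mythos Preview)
Your proof is correct and follows essentially the same route as the paper: both arguments reduce the probability to $\binom{n}{k+1}\cdot \dfrac{d_{t'}(1)}{(n-1)(n-1)!}$ by counting configurations via the $(k+1)$-set $\{v_0,v_1,\ldots,v_k\}$ (your $W=U\cup\{v_0\}$), reconstructing $t$ from $t'$, $t''$ and this set, and using that $\deg_t(v_1)=d_{t'}(1)$. Your explicit bookkeeping over pairs $(t,v_1)$ rather than trees alone is a mild clarification, but the argument is the same.
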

	
	\begin{proof}
		Note that, conditionally given $v_1v_2$, we have either $v_1=1$, so that $\Phi(T_n^{(v_1)})=T_n$ and $T_n^{root}=\emptyset$, or $v_1>1$ which implies that $T_n=T_n^{root}\cup T_n^{(v_1)}\cup \{v_0v_1\}$ and $2\le |T_n^{(v_1)}|< n$. 
		
		For the first case, let $t,t'\in I_n$, then $$\p{T_n^{(v_1)}=t'|T_n=t}=\p{v_1=1|T_n=t}\I{t'=t}.$$
		The probability on the right side, given that $t=t'$, is precisely the ratio between the degree of the root of $t'$ and the number of edges. Therefore
		\begin{equation*}
			\p{\Phi(T_n^{(v_1)})=t'}=\sum_{t\in I_n}\p{T_n^{(v_1)}=t'\vert T_n=t}\p{T_n=t}=\frac{\text{deg}_{t'}(1)}{n-1}\p{T_n=t'}.
		\end{equation*}
		Furthermore, $\vert I_n\vert=(n-1)!$, so that $\p{T_n=t'}=\frac{1}{(n-1)!}$ and \eqref{eq: Ts=0} is obtained. 
		
		Let $2\leq k<n$, $t'\in I_{k}$ and $t''\in I_{n-k}$. Similarly, it suffices to verify that 
		\begin{align}\label{eq: cond}
			\p{\Phi(T_n^{(v_1)})=t', \Phi(T_n^{root})=t''\vert T_n=t}= \frac{\text{deg}_{t'}(1)}{n-1}\I{t\in C_{t'}};
		\end{align}
		where $C_{t'}$ is the set of increasing trees $t$ of size $n$ such that there is a vertex set $\{v_0,\dots,v_{k}\}$ such that {\it i)} $t^{(v_1)}$ has vertex set $ \{v_1,\dots,v_{k} \}$, {\it ii)} $\Phi(t^{(v_1)})=t'$, {\it iii)} $v_0v_1\in t$ and {\it iv)} $\Phi(t\setminus t')=t''$ (in particular, the vertex set of $t\setminus t'$ is $[n] \setminus \{v_1,\dots,v_{k}\}$).
		
		The above conditions imply that $|C_{t'}|=\binom{n}{k+1}$. To see this, let 
		$L= \{v_0,\dots,v_k\}\subset [n]$ and $W=\{w_1, \ldots, w_{n-k}\}=[n]\setminus \{v_1,\ldots, v_k\}$ satisfy $v_i<v_{i+1}$ and $w_j\le w_{j+1}$ for all $i,j$. Denote by $t'_L$ the tree obtained after relabelling each vertex $i\in t'$ with vertex $v_i$; similarly, let $t''_W$ be the tree obtained after relabelling each vertex $j\in t''$ with vertex $w_j$. It is clear that each $t\in C_{t'}$ has the form $t=t'_L\cup t''_W\cup\{v_0v_1\}$ for some set $L\subset [n]$ of size $k+1$.
		
		To verify \eqref{eq: cond} we observe that the probability vanishes whenever $t\notin C_{t'}$. On the other hand, conditionally given that $t=t'_L\cup t''_W\cup \{v_0v_1\} \in C_{t'}$, the event $\{\Phi(T_n^{(v_1)})=t', \Phi(T_n^{root})=t''\}$ is equivalent to choosing one of the $\text{deg}_{t'}(1)$ edges that are children of $v_1$ in $t$, this probability is uniform over $t\in C_{t'}$.
		Finally, \eqref{eq: cond} together with $\p{T_n=t}=\frac{1}{(n-1)!}$ yields \eqref{eq: joint}.
	\end{proof}
	
	The joint distribution of $(T_n^{(v_1)}, T_n^{root})$ depends only on the size and degree of the tree that is to be cut and not the tree that remains. Thus, conditionally given the size of $|T_n^{root}|=\ell$, we have that $T_n^{root}$ is equally likely on $I_\ell$, so that Proposition \ref{prop: split} is established.
	A similar argument, together with an averaging of $\text{deg}_{t'}(1)$ over $t'\in I_k$, yields Proposition \ref{prop: size}.
	
	\begin{proof}[Proofs of Propositions \refand{prop: split}{prop: size}]
		Let $1< k\le n$ and $\ell=n-k$, we identify $\E{\text{deg}_{T_k}(1)}$ with the following expression  
		\begin{align}\label{eq: Hk}
			\sum_{t'\in I_k} \frac{\text{deg}_{t'}(1)}{(k-1)!}=\E{\text{deg}_{T_k}(1)}= H_{k-1}.
		\end{align}
		We compute the marginal distribution of $\Phi(T_n^{root})$. If $\ell\ge 1$, using \eqref{eq: joint} and \eqref{eq: Hk}, we get for any $t''\in I_{\ell}$,
		\begin{align*}
			\p{\Phi(T_n^{root})=t''}
			=\sum_{t'\in I_{k}} \frac{n \text{deg}_{t'}(1)}{(n-1)(k+1)!(n-k-1)!}
			=\frac{nH_{k-1}}{(n-1)(k+1)(k)(n-k-1)!};
		\end{align*}
		note that the probabilities above depend only on $k=n-|t''|$, establishing Proposition \ref{prop: split}. On the other hand, since $|I_\ell|=(\ell-1)!=(n-k-1)!$ we have 
		\begin{align*}
			\p{|T_n^{root}|=\ell} = \sum_{t''\in I_{\ell}} \p{\Phi(T_n^{root})=t''}
			=\frac{nH_{k-1}}{(n-1)(k+1)k};
		\end{align*}
		this establishes the first term of \eqref{eq: size} since $|T_n^{root}|=\ell$ if and only if $|T_n^{(v_1)}|=k$. Finally, from \eqref{eq: Ts=0} and \eqref{eq: Hk}, we get the case $k=n$, 
		\begin{align*}
			\p{|T_n^{(v_1)}|=n}= \sum_{t'\in I_n} \frac{n\text{deg}_{t'}(1)}{(n-1)(n-1)!}=\frac{nH_{n-1}}{n-1}
		\end{align*}
		completing the proof of Proposition \ref{prop: size}.
	\end{proof}
	
	\begin{prop}\label{prop: recursive}
		Let $K_0=K_1=0$ and $K_n$, $n\ge 2$, be as in Definition~\ref{dfn: process}. Then $\p{K_2=1}=\p{K_3=1}=1$ and for $n\ge 4$, 
		\begin{align}
			\p{\K_n=1} &= \dfrac{H_{n-2}}{(n-1)^2} +\dfrac{H_{n-1}}{n-1}, \label{eq: K=1} \\
			\p{\K_n=j} &=  \sum_{k=2}^{n-2} \dfrac{nH_{k-1}}{k(k+1)(n-1)}\p{\K_{n-k}=j-1};  & j\geq 2. \label{eq: K>1}
		\end{align}
		
	\end{prop}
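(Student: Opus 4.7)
The plan is a first-step analysis of the degree-biased cutting process, combined with the splitting property of Proposition~\ref{prop: split} to identify an independent copy of $\K_{n-k}$ in the remaining evolution.

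I would begin with the base cases $n=2$ and $n=3$ by direct inspection. For $n=2$, the unique edge is $12$, so $v_1=1$ is the root and the tree is instantly destroyed, giving $\p{\K_2=1}=1$. For $n=3$ either $T_3$ is a star (both edges have $v_1=1$, so any cut destroys the root) or $T_3$ is a path $1$--$2$--$3$ (cutting edge $12$ destroys the root, while cutting edge $23$ removes $t^{(2)}=\{2,3\}$ leaving only the root); in every case $|G_1|\in\{0,1\}$, so $\p{\K_3=1}=1$.

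For $n\ge 4$, I would decompose according to $k=|T_n^{(v_1)}|$. The event $\{\K_n=1\}$ occurs precisely when the first cut already yields $|G_1|\le 1$, i.e., when $k=n-1$ (only the root remains) or $k=n$ (instant destruction). Substituting these two values into Proposition~\ref{prop: size} gives, respectively, $\tfrac{nH_{n-2}}{(n-1)\,n\,(n-1)}=\tfrac{H_{n-2}}{(n-1)^2}$ and $\tfrac{H_{n-1}}{n-1}$, which together are the two terms in \eqref{eq: K=1}. For $j\ge 2$ only values $2\le k\le n-2$ can contribute, since otherwise $|G_1|\le 1$ and the process would have halted. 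Conditioning on $|T_n^{(v_1)}|=k$ and writing $\K_n=1+\K_n'$, where $\K_n'$ is the number of further cuts applied to $T_n^{root}$, one obtains
\[
\p{\K_n=j} \;=\; \sum_{k=2}^{n-2}\p{|T_n^{(v_1)}|=k}\,\p{\K_n'=j-1 \mid |T_n^{root}|=n-k}.
\]
Proposition~\ref{prop: split} asserts that, conditionally on $|T_n^{root}|=n-k$, the relabelled tree $\Phi(T_n^{root})$ is a random recursive tree on $n-k$ vertices; because a degree-biased cut depends only on the rooted tree structure, not on labels, the conditional distribution of $\K_n'$ coincides with that of $\K_{n-k}$. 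Plugging in the explicit formula $\p{|T_n^{(v_1)}|=k}=\tfrac{nH_{k-1}}{(n-1)(k+1)k}$ then yields \eqref{eq: K>1}.

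The only substantive step is the Markovian reduction: one needs that the future degree-biased cuts are determined by the current tree alone, so that Proposition~\ref{prop: split} delivers the conditional distribution of the entire remainder of the process and not just of a single cut. Everything else is direct substitution of the probabilities provided by Proposition~\ref{prop: size}.
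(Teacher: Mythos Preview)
Your proposal is correct and follows essentially the same first-step analysis as the paper: decompose on $k=|T_n^{(v_1)}|$, use Proposition~\ref{prop: size} for the case $k\in\{n-1,n\}$ to obtain \eqref{eq: K=1}, and invoke the splitting property (Proposition~\ref{prop: split}) together with label-invariance of the process to identify the conditional law of the remaining cuts with $\K_{n-k}$ for \eqref{eq: K>1}. Your explicit treatment of the Markovian reduction and the direct verification of the base cases $n=2,3$ are slightly more detailed than the paper's version, but the argument is the same.
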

	
	\begin{proof}
		Let $T_n$ be a random recursive tree and let $T^{(v_1)}_n$ be the tree defined after one degree-biased cut of $T_n$, and $K_n$ be the number of cuts necessary to erase $T_n$.
		
		If the size of $T^{(v_1)}_n$ is either $n-1$ or $n$, then $T_n$ is instantly erased and $K_n=1$. By \eqref{eq: size} in  Proposition~\ref{prop: size}, we obtain \eqref{eq: K=1} for $n\ge 2$; in particular, $K_2=K_3=1$ almost surely.
		
		Let $n\ge 4$ and $j\ge 2$. By the splitting property, Proposition~\ref{prop: split}, we have 
		\[\p{\K_n=j}=\sum_{k=2}^{n-2} \p{\vert T_n^{(v_1)} \vert=k} \p{\K_{n-k}=j-1};\]
		indeed, the cutting procedure after one cut continues on a tree of size $n-k$, conditionally given $|T^{(v_1)}_n|=k$, regardless of the labels in $T_n\setminus T^{(v_1)}_n$. Using \eqref{eq: size} in the expression above, we obtain \eqref{eq: K>1}.    
	\end{proof}
	
	\subsection{Harmonic-biased random walk with a barrier}\label{sec: jump process}
	
	As we mentioned before, when $\xi$ has distribution as in \eqref{eq: z}, the process $(R^{(n)}_i)_{i\ge 0}$ corresponds precisely to the number of jumps of the classic cutting process of random recursive trees. In the case of the degree-biased cutting, conditionally given a tree of size $n$, the size of the cut $\zeta^{(n)}$ follows the distribution
	$\p{\zeta^{(n)}=k}=f_n(k)n/(n-1)$ where 
	\begin{align}\label{dfn: f}
		f_n(k):=\frac{H_{k-1}}{k(k+1)}\I{2\le k<n} + \frac{H_{k-1}}{k}\I{k=n}. 
	\end{align}
	
	Unfortunately, there is no distribution $\zeta$ on positive integers such that, for $2\le k\le n$,
	\begin{align}\label{eq: contradiction}
		\p{\zeta=k|\zeta\le n}=\p{\zeta^{(n)}=k}
	\end{align}
	To see this, write $p_k:=\p{\zeta=k}$. Rearranging the terms of \eqref{eq: contradiction}, we need 
	\begin{align}\label{eq: equiv}
		\frac{n}{n-1}\sum_{i=2}^n p_i =\frac{p_k}{f_n(k)}
	\end{align}
	to hold for all $2\le k\le n$. Since the left side does not depend on $k$ and converges to one, we infer that it is necessary that $p_k=\frac{H_{k-1}}{k(k+1)}$ for all $k\ge 2$. However, in such a case, \eqref{eq: equiv} fails to hold when $k=n$ and $n\ge 3$.
	
	\begin{rmk}\label{rmk: atom}
		The probability that the degree-biased cutting procedure is erased in one step has a relatively large mass coming from an \emph{instant deletion} of the tree (case $k=n$ in \eqref{dfn: f}). Thus, $\zeta^{(n)}$ has anomalous behaviour at its largest atom $n$, preventing an accurate approximation by an unbounded random variable $\zeta$.  
	\end{rmk}
	
	In what follows we let $(R^{(n)}_i)_{i\ge 0}$ be the random walk with a barrier on $n$ with jump distribution $\zeta$ given by $\p{\zeta=k}=\frac{H_{k-1}}{k(k+1)}$ and let $J_n$ be the number of jumps of the process $(R^{(n+1)}_i)_{i\ge 0}$; see Section \ref{sec: Main}. Then $\zeta$ is our best jump distribution approximation for the number of vertices discarded after each degree-biased cut. The role of $\zeta$ is analogous to $\xi$, defined \refEq{eq: jump xi}, for the classic random cutting process of $T_n$.
	
	Let $\tau=\inf\{i\in \N_0: R^{(n+1)}_i>0\}$. By the strong Markov property, the process $(R^{(n+1)}_{\tau+ j})_{j\ge 0}$ is independent of $(R^{(n+1)}_i)_{0\le i\le \tau}$ given $R^{(n+1)}_\tau$. In particular, conditionally given $R^{(n+1)}_\tau=k$, $1<k\le n$, $(R^{(n+1)}_{\tau+ j})_{j\ge 0}$ has the same distribution as $(R^{(n-k+1)}_j)_{j\ge 0}$. 
	Let $J_0=J_1=0$. Similarly to \refP{prop: recursive}, for $j\ge 2$, 
	\begin{align}\label{eq: recursive}
		\p{J_n=j}= \sum_{k=2}^{n-2} \p{\zeta=k \,\vert\, \zeta\leq n} \p{J_{n-k}=j-1}.
	\end{align}
	
	We now prepare for the proofs of Theorems \refand{thm: J}{thm: domination}. Using a telescopic argument, we have for $i\geq 0$,
	\begin{equation}\label{eq: telescopic}
		\sum_{k=i+1}^n\frac{1}{k(k+1)}= \sum_{k=i+1}^n	\frac{1}{k}-	\frac{1}{k+1}=\frac{1}{i+1}-\frac{1}{n+1}.
	\end{equation}
	In turn, for $n\ge 2$,
	\begin{equation}\label{eq: dist xi}
		\p{\zeta\leq n} = \sum_{k=2}^{n}\sum_{i=1}^{k-1}\dfrac{1}{ik(k+1)}=\sum_{i=1}^{n-1}\dfrac{1}{i}\sum_{k=i+1}^{n}\dfrac{1}{k(k+1)} =1- \frac{1}{n} -\frac{H_{n-1}}{n+1}.
	\end{equation} 
	This verifies, in passing, that $\zeta$ is a random variable on $\N$. In addition, note that 
	\begin{align*}
		\left( 1-\frac{1}{n} -\frac{H_{n-1}}{n+1}\right)^{-1} = \frac{n(n+1)}{n^2-nH_{n-1}-1}\ge \frac{n+1}{n-H_{n-1}}\ge \frac{n}{n-1}.
	\end{align*}
	Consequently, for $2\le k<n$, 
	\begin{align}\label{eq: key}
		\p{\zeta=k|\zeta\le n}= \frac{H_{k-1}}{k(k+1)}\left( 1-\frac{1}{n} -\frac{H_{n-1}}{n+1}\right)^{-1}\ge \p{\zeta^{(n)}=k}.    
	\end{align}
	
	\begin{proof}[Proof of \refT{thm: domination}]
		For $n\le 3$, $K_n=J_n$ almost surely. 
		For $n\ge 4$, it suffices to verify that 
		$\p{J_n\geq j}\ge \p{\K_n\geq j}$ for all $j\in \N$. Clearly $\p{J_n\geq 1}=\p{\K_n\geq 1}=1$, so henceforth we assume that $j\ge 2$. 
		From the expression in \eqref{eq: recursive}, by changing the order of the sums, we get
		\begin{align}
			\p{J_n\geq j}&= \sum_{i=j}^{\infty}\sum_{k=2}^{n-2} \p{\zeta=k \,\vert\, \zeta\leq n} \p{J_{n-k}=i-1} \nonumber \\ 
			&=\sum_{k=2}^{n-2} \p{\zeta=k \,\vert\, \zeta\leq n} \p{J_{n-k}\ge j-1}; \label{eq: middle} 
			\intertext{analogously, using \eqref{eq: K>1},}
			\p{\K_n\geq j}&= \sum_{k=2}^{n-2} \p{\zeta^{(n)}=k} \p{\K_{n-k}\geq j-1}. \label{eq: last}
		\end{align}
		The proof of Theorem \ref{thm: domination} is complete by induction on $j\ge 2$ and $n\ge 4$ using \eqref{eq: key}--\eqref{eq: last}.
	\end{proof}
	
	\begin{proof}[Proof of \refT{thm: J}]
		
		From \eqref{eq: dist xi} we have that the tails of $\zeta$ satisfy 
		\begin{equation}\label{eq: tail xi}
			\p{\zeta \geq n}= 1-\p{\zeta \le n-1}= \frac{1}{n-1}+\frac{H_{n-2}}{n}=\frac{\ln n}{n}(1+o(1)).
		\end{equation}
		
		Thus, 
		\[\sum_{j=1}^n \p{\zeta \geq j}= \left(\sum_{j=1}^n \frac{H_j}{j}\right)(1+o(1)) \sim\frac{(\ln n)^2}{n}.\]
		
		Therefore, conditions of \refP{prop: esp Mn} are satisfied with $\ell(x)=(\ln n)^2/2$ for $x> 1$. Hence, $\E{J_n}\sim 2n(\ln n)^{-2}$ and $J_n/\E{J_n}$ converges in probability to one as $n\to\infty$.
		
		We now verify that the conditions of \refP{prop: conv Mn} are satisfied for $a(x)=4x(\ln x)^{-3}$, $b(x)=2x(\ln x)^{-2}$ and $c(x)=x\ln x$.
		
		First, substitution of $n=x\ln x$ in \refEq{eq: tail xi} yields 
		\[ x\p{\zeta\geq x\ln x}= x\p{\zeta\geq\ceil{ x\ln x}}= \frac{x(\ln x+\ln\ln x)}{x\ln x}(1+o(1))\to 1, \text{ as } x\to\infty,\] where $y\mapsto \ceil{y}$ is the ceiling function.
		
		Again, by using \refEq{eq: tail xi} in the definition of $\phi(x)$ and 
		\begin{align}\label{eq: ln2}
			\int_1^n \frac{\ln x}{x} dx = \frac{(\ln n)^2}{2},
		\end{align}
		we have $\phi(x)\sim\frac{x(\ln x)^{2}}{2}$ since
		\[\phi(x)=
		2x+x \int_{2}^{x\ln{x}}\frac{\ln y}{y} (1+o(1)) dy=\frac{x(\ln (x\ln x))^2}{2}(1+o(1)) \sim\frac{x(\ln x)^{2}}{2};\]
		where we used that $\ln (x\ln x)\sim \ln x$.
		Similarly, the fact that $\ln x$ is a slowly varying function implies that $\ln 2x -2\ln\ln x \sim\ln x + (2\ln\ln x-\ln 2)\sim \ln x$. Therefore
		
		\[\phi(b(x)) \sim\frac{x(\ln 2x -2\ln\ln x)^{2}}{(\ln{x})^2}\sim x\sim \frac{x(\ln x)^2}{(\ln x + (2\ln\ln x-\ln 2))^2}\sim b(\phi(x));\]
		whereas,   
		\[x^{-1}b(x)c(b(x)) =\dfrac{4x(\ln{x}-2\ln{\ln{x}}-\ln 2)}{(\ln{x})^4}\sim a(x).\]
		
		Proposition \ref{prop: esp Mn} then implies that 
		\begin{align*}
			\frac{J_n- b(n)}{a(n)}= \frac{J_n}{4n}(\ln n)^3-\frac{\ln n}{2}
		\end{align*}
		converges in distribution to a random variable $X$ with characteristic function \[\E{e^{itX}}=\exp\left(it\ln |t|-\frac{\pi}{2}|t| \right),\]
		which recovers the statement in \refT{thm: J}.
	\end{proof}
	

	\section{A related coalescent process}\label{sec: Related}
	
	There is a natural coalescent process associated with the cutting procedure of a random tree. A coalescent process is a Markov process that takes its values in the set of partitions of $\mathbb{N}$ and it is generally described by its restrictions to the set $[n]$, for $n\in \mathbb{N}$.
	
	The general procedure for obtaining an $n$-coalescent from a cutting procedure is the following. At any given step, we have a tree whose vertex set $\{v_1,v_2,\ldots, v_m\}$ is the blocks of a partition of $[n]$; moreover, the blocks are listed in increasing order of least elements and all paths starting from the root $v_1$ are increasing. Select a random vertex $v$ (which in the cutting process would be removed) and add the labels of the removed vertices into the label $v$; in other words, coalesce all the blocks of the subtree of $v$, inclusive, to form a tree on a new label-set. The $n$-coalescent is the process of the partition of $[n]$ corresponding to the label set; starting from a tree with all singletons.  
	
	Goldschmidt and Martin first proposed the connection between cutting procedures and coalescent processes \cite{Goldschmidt2005}. They identified that the coalescent starting from a random recursive tree $T_n$ defines the Bolthausen Sznitman coalescent on $\mathbb{N}$. Conditionally given that there are $b$ blocks in the partition, for $2\leq k\leq b\leq n$, the rate at which any given set of $k$ labels coalesce is
	\begin{equation}\label{eq: BS rates}
		\frac{(b-k)!(k-2)!}{(n-1)!}.
	\end{equation}
	
	Procedures to obtain beta-coalescent use uniformly random binary trees and Galton-Watson trees with offspring distribution in the domain of attraction of a stable law of index $\alpha\in [1/2,1)$ \cite{Abraham2013}.
	
	A similar process is studied in \cite{pitters2016} for plane-oriented recursive trees (also identified as preferential attachment trees): Select a vertex $v_1$ uniformly at random. If $v_1$ is a leaf, do nothing. Otherwise, select one of its successors, say $v_2$, uniformly at random. Proceed as the process described above with the edge $v_1v_2$. In this case, the resulting coalescent process is related to an arcsine coalescent; however, the finite coalescent processes are not consistent, that is, these cannot be extended to a coalescent process on $\mathbb{N}$.
	
	\subsection{Degree-biased version}
	
	We define the degree-biased coalescent process as follows: start with $T_n$ a random recursive tree on $[n]$ and associate an independent exponential random variable with mean $1$ to each edge. This random variable is the time when the edge is selected to perform the first step of the degree-biased cutting process.
	
	If the edge $\ell_i\ell_j$ is selected and $i>1$, then there exists $\ell_h$ such that $\ell_h\ell_i$ is an edge on $T_n$. The labels in the subtree $T^{(\ell_i)}$ are instantly added to the label of the vertex $\ell_h$. That is, the new set of labels replaces $\ell_h$ with $\ell_h\cup\{s: s\in T_n^{(\ell_i)}\}$.

	The proof of the following proposition is based on \refP{prop: size} and the ideas in \refL{lem: joint}.
	
	\begin{prop}\label{prop: rate}
		The first coalescing event corresponding to the degree-biased cutting of a random recursive tree of size $n$ merges any given set of $k$ labels at rate 
		
		\begin{align}
			\lambda'_{n,k}&= \dfrac{(n-k)!(k-2)!}{(n-1)!}H_{k-2} \quad \text{for $3\leq k<n$}, \label{eq1}\\
			\lambda'_{n,n}&= H_{n-1} +\dfrac{H_{n-2}}{n-1}. \label{eq2}
		\end{align} 
	\end{prop}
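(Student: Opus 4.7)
The plan is to interpret $\lambda'_{n,k}$ as the expected number, over the random recursive tree $T_n$, of edges whose exponential-clock ringing would coalesce a prescribed $k$-element subset $S\subseteq[n]$ of singleton labels; since each edge carries an independent rate-$1$ clock, this expected count is exactly $\lambda'_{n,k}$. First I would identify which edges of $T_n$ produce $S$ as the merged block. When an edge $v_1v_2$ with $v_1<v_2$ rings, the coalescing labels are $\{v_0\}$ together with the vertex set of $T_n^{(v_1)}$ provided $v_1>1$ (where $v_0$ is the parent of $v_1$), whereas the entire set $[n]$ coalesces instantly when $v_1=1$. For $3\le k<n$ only the first mechanism can produce a set of size $k$, and the increasing-tree constraints $v_0<v_1$ and $v_1=\min V(T_n^{(v_1)})$ uniquely force $v_0=\min S$ and $v_1=\min(S\setminus\{v_0\})$.

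Next I would evaluate the rate by decomposing $T_n$ along the structural constraints. Setting $S'=S\setminus\{v_0\}$ and $V'=[n]\setminus S'$ (so that $1\in V'$), the trees $T_n$ producing $S$ through the first mechanism are exactly those of the form $T_n=T'\cup T''\cup\{v_0v_1\}$ with $T'$ any increasing tree on $S'$ rooted at $v_1$ and $T''$ any increasing tree on $V'$ rooted at $1$. Given such a $T_n$, the number of edges $(v_1,v_2)\in T_n$ whose ringing triggers the coalescence of $S$ equals $\deg_{T'}(v_1)$, the degree of the root of $T'$. Each such $T_n$ has probability $1/(n-1)!$; the sum over $T''$ contributes a factor $|I_{|V'|}|=(n-k)!$, and, by \eqref{eq: Hk}, the sum over $T'$ yields $\sum_{T'\in I_{|S'|}}\deg_{T'}(v_1)=(k-2)!\,H_{k-2}$, recovering \eqref{eq1}.

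For $k=n$, the set $S=[n]$ coalesces either via the previous mechanism specialized to $v_0=1$, $v_1=2$, $S'=\{2,\ldots,n\}$, which contributes $H_{n-2}/(n-1)$ by the same calculation, or via an instant destruction, triggered whenever an edge incident to the root $1$ rings and hence carrying rate $\E{\deg_{T_n}(1)}=H_{n-1}$; summing the two contributions yields \eqref{eq2}. The main technical point I expect to verify carefully is the uniqueness of the pair $(v_0,v_1)$ for a given $S$ together with the independent decomposition of $T_n$ into the two subtrees $T'$ and $T''$; both rely on the increasing-label structure already exploited in the proof of \refL{lem: joint} and \refP{prop: size}.
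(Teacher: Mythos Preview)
Your proposal is correct and follows essentially the same approach as the paper: both identify the coalescing rate for a prescribed set $S$ as the expected number of edges whose ringing merges $S$ (equivalently, $(n-1)$ times the probability over a uniform edge and a random $T_n$), then decompose $T_n$ along the forced pair $(v_0,v_1)=(\min S,\min(S\setminus\{v_0\}))$ using the combinatorics of \refL{lem: joint} together with \eqref{eq: Hk}. Your two-contribution treatment of $k=n$ is exactly the paper's computation of $(n-1)\,\p{|T_n^{(v_1)}|\in\{n-1,n\}}$ via \refP{prop: size}.
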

	
	\begin{proof}
		We start with the case $k=n$ corresponding to having all singletons merged in one step; from the degree-biased cutting procedure, this is equivalent to having the process end in exactly one step. Namely, either the tree is instantly erased (an edge adjacent to the root is selected in the cutting process) or the root has exactly one vertex attached, vertex 2, and the selected edge $v_1v_2$ has $v_1=2$. In other words, the probability that all 
		blocks coalescent corresponds to the probability that $|T_n^{(v_1)}|\in \{n-1,n\}$. Since the next coalescent event is the minimum of $n-1$ exponential random variables with rate 1, the total rate of instant coagulation is, by \refP{prop: size},
		\begin{align*}
			\lambda'_{n,n}= (n-1) \p{|T_n^{(v_1)}|\in \{n-1,n\}}= H_{n-1}+\frac{H_{n-2}}{n-1}.
		\end{align*}
		For the case $3\le k<n$, let $L=\{v_0,v_1,\ldots, v_{k-1}\}$ be a set of $k$ vertices in $T_n$ such that $v_0<v_1<\cdots <v_{k-1}$. Then, the rate $\lambda_L$ at which elements in $L$ coalesce is given by 
		\begin{align*}
			\lambda_L= (n-1) \p{V(T_n^{(v_1)})=L}.
		\end{align*}
		For each $t'\in I_{k-1}$, let $C_{L,t'}$ be the set of increasing trees $t$ of size $n$ such that {\it i)} $t^{(v_1)}$ has vertex set $ \{v_1,\dots,v_{k-1} \}$, {\it ii)} $\Phi(t^{(v_1)})=t'$ and {\it iii)} $v_0v_1\in t$. By a similar argument as that in the proof of \refL{lem: joint}, we have
		\begin{align*}
			\p{V(T_n^{(v_1)})=L}&= \sum_{t'\in I_{k-1}}\sum_{t\in C_{L,t'}} \p{T_n=t, \Phi(T_n^{(v_1)})=t'} 
			& = \sum_{t'\in I_{k-1}}\sum_{t\in C_{L,t'}} \frac{\text{deg}_{t'}(1)}{(n-1)(n-1)!};
		\end{align*}
		using $|C_{L,t'}|=|I_{n-k+1}|=(n-k)!$ together with \eqref{eq: Hk}, we get 
		\begin{align*}
			\lambda_L=\frac{(n-k)!}{(n-1)!} \sum_{t'\in I_{k-1}} \text{deg}_{t'}(1) =\frac{(n-k)!(k-2)!}{(n-1)!} H_{k-2}.
		\end{align*}
		The rate $\lambda_L$ depends only on the size of $L$; completing the proof of \eqref{eq1}.
	\end{proof}
	
	As we mentioned before, these coalescents can not be extended to a $\Lambda$-coalescent process on $\N$ since their rates do not satisfy the consistency equations $\lambda'_{n,k}=\lambda'_{n+1,k}+\lambda'_{n+1,k+1}$. For example, the following expression does not vanish,
	\begin{align*}
		\lambda'_{n,n}+\lambda'_{n,n-1}-\lambda'_{n-1,n-1}
		&=H_{n-1}-H_{n-2} +\frac{H_{n-2}}{n-1}-\frac{H_{n-3}}{n-2}+\frac{H_{n-3}}{(n-1)(n-2)};
	\end{align*}
	after a careful rearrangement and cancellation of terms, it simplifies to 
	\begin{align*}
		\frac{1}{n-1}  +\frac{H_{n-2}}{n-1} -\frac{H_{n-3}}{n-1}= \frac{1}{n-1}+\frac{1}{(n-1)(n-2)}
		=\frac{1}{n-2}>0.
	\end{align*}
	
	\bibliographystyle{plain}
	
\end{document}